\documentclass[11pt]{amsart}
 \usepackage{amsmath}
 \usepackage{amssymb}
 \usepackage{amsthm}
\usepackage{color}
\usepackage{epsfig}

\newcommand{\R}{\mathbb R}

 \newtheorem{theo}{Theorem}
 \newtheorem{lem}[theo]{Lemma}

\theoremstyle{definition}

\begin{document}
\title[Phase transition for the geodesic flow]{A remark on the phase transition for the geodesic flow of a rank one surface of nonpositive curvature}
\author{Keith Burns}
\address{Northwestern University, Evanston,  IL 60208, USA}
\email{burns@math.northwestern.edu}
\author{Dong Chen}
\address{The Ohio State University, Columbus, OH 43210, USA}
\email{chen.8022@osu.edu}

%\author{Keith Burns and Dong Chen}
\dedicatory{Dedicated to the memory of Todd Fisher}
\maketitle

\def \R{\mathcal{R}} \def \S{\mathcal{S}} \def \F{\mathcal{F}}

Let $M$ be a compact Riemannian surface of nonpositive curvature  and $\F = (f^t)_{t\in\mathbb{R}}$ is the geodesic flow on the unit tangent bundle $T^1M$.  
A vector in $v \in T^1M$ is called regular if the geodesic to which it is tangent passes through a point where the curvature of $M$ is negative and singular if the curvature of $M$ is $0$ at all points along this geodesic. We denote the sets of regular and singular  vectors  by $\R$ and  $\S$ respectively. It is clear that $\R$ is open and $\S$ is closed.
We assume that both sets are nonempty.

As explained in \cite{BBFS21}, the unstable subbundle $E^u$  is a continuous 
 $\F$-invariant one dimensional subbundle of $TT^1M$ on which the derivative of the geodesic flow is noncontracting. The unstable Jacobian potential  (often called the geometric potential) $\varphi^u: T^1M\to \mathbb{R}$ is defined by
 $$
 \varphi^u(v) = -\lim_{t\to0}\frac{1}{t}\log\det df^t|_{E^u_v} = -\frac{d}{dt}\Big|_{t=0}\log\det df^t|_{E^u_v}.
 $$

  \begin{figure}[htb]
\begin{center}
\includegraphics[width=.6\textwidth]{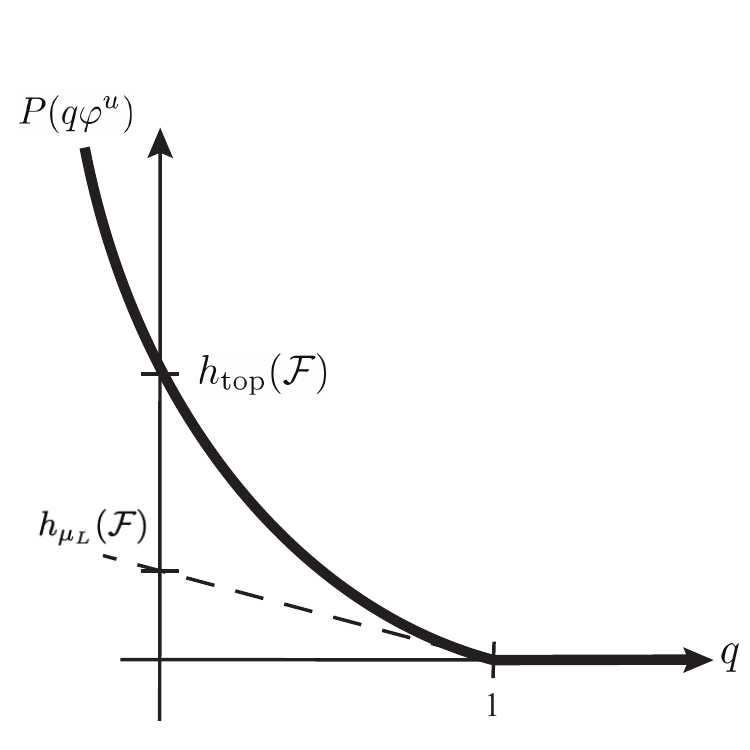}
\caption{Graph of the pressure}\label{f.graph}
\end{center}
\end{figure}

 We denote topological pressure by $P$. The graph of the function $q \mapsto P(q\varphi^u)$ is shown in Figure~\ref{f.graph}, which is copied from \cite{BBFS21}.
 There is  a phase transition at $q = 1$. 
It was shown in \cite{BCFT18} that, for each $q < 1$, there is a unique equilibrium state $\mu_q$ for $q\varphi^u$. Moreover $\mu_q(\R) = 1$ for all $q < 1$. For $q \geq 1$, any measure supported on $\S$ is an equilibrium state for $q\varphi^u$, and these are the only equilibrium states when $q > 1$. The main result of \cite{BBFS21} is that for $\varphi^u$, i.e.\ when $q =1$, there is exactly one additional ergodic equilibrium state $\mu_L$. It is the restriction to $\R$  of the Liouville measure. 

It is natural to ask if $\mu_q \to \mu_L$ as $q \to 1$, but this question was not addressed in \cite{BBFS21}. We answer it here.

\begin{theo}
 $$
 \mu_q \to \mu_L
  $$
 in the weak$^*$-topology as $q \to 1-$.
 \end{theo}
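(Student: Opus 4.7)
The plan is to extract weak$^*$ subsequential limits of $\{\mu_q\}$ as $q\to 1^-$, show that each such limit is itself an equilibrium state for $\varphi^u$, and then combine the classification from \cite{BBFS21} with a convex-analytic computation of $\lim_{q\to 1^-}\int\varphi^u\,d\mu_q$ to force the limit to be $\mu_L$.

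Set $f(q):=P(q\varphi^u)$. As a supremum over invariant measures of affine functions of $q$, $f$ is convex; since $\mu_q$ is the unique equilibrium state for $q<1$, $f$ is differentiable at each such $q$ with $f'(q)=\int\varphi^u\,d\mu_q$, so by convexity $f'_-(1):=\lim_{q\to 1^-}f'(q)$ exists. Pick any sequence $q_n\to 1^-$ along which $\mu_{q_n}\to\mu_*$ weak$^*$; such subsequences exist by compactness of invariant probabilities, and it suffices to show every such $\mu_*$ equals $\mu_L$. Continuity of $\varphi^u$ gives $\int\varphi^u\,d\mu_*=f'_-(1)$. Upper semicontinuity of the entropy map $\mu\mapsto h_\mu$, which holds for the geodesic flow on a compact nonpositively curved surface (e.g.\ by asymptotic $h$-expansiveness), applied to the identity $f(q_n)=h_{\mu_{q_n}}+q_n\int\varphi^u\,d\mu_{q_n}\to f(1)=0$ forces $h_{\mu_*}\geq -f'_-(1)$, so $h_{\mu_*}+\int\varphi^u\,d\mu_*\geq 0=P(\varphi^u)$ with equality. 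Thus $\mu_*$ is an equilibrium state for $\varphi^u$.

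By the classification in \cite{BBFS21} and ergodic decomposition, $\mu_*=\alpha\mu_L+(1-\alpha)\nu$ for some $\alpha\in[0,1]$ and some $\F$-invariant probability $\nu$ supported on $\S$. Since the singular set carries no exponential expansion, $\int\varphi^u\,d\nu=0$; and Pesin's formula applied to $\mu_L$ gives $\int\varphi^u\,d\mu_L=-h_{\mu_L}<0$ (with $h_{\mu_L}>0$ because $\R$ has positive Lyapunov exponent). Hence $\int\varphi^u\,d\mu_*=-\alpha h_{\mu_L}$. Testing $\mu_L$ in the variational principle yields $f(q)\geq (1-q)h_{\mu_L}$ for all $q$; dividing by $q-1<0$ and letting $q\to 1^-$ gives $f'_-(1)\leq -h_{\mu_L}$. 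Combining, $-\alpha h_{\mu_L}=f'_-(1)\leq -h_{\mu_L}$, which forces $\alpha=1$ and thus $\mu_*=\mu_L$.

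The main obstacle is ensuring upper semicontinuity of the entropy map, which is what promotes the weak$^*$ limit $\mu_*$ from an abstract accumulation point to an actual equilibrium state. Once this is granted, pinning down $\alpha=1$ reduces to a short convex-analytic comparison between the left derivative $f'_-(1)$ and the slope $-h_{\mu_L}$ of the line supporting $f$ at $q=1$ from below, both of which are controlled by quantities already computed in \cite{BBFS21} and by Pesin's formula.
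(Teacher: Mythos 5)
Your proposal is correct and follows essentially the same route as the paper: extract a weak$^*$ accumulation point, use upper semicontinuity of the entropy map (via $h$-expansiveness) to show it is an equilibrium state for $\varphi^u$, and then use convexity of $q\mapsto P(q\varphi^u)$ together with the supporting line coming from $\mu_L$ at $q=1$ to show the limiting value of $\int\varphi^u\,d\mu_q$ is at most $\int\varphi^u\,d\mu_L<0$, which rules out any singular component in the limit. The only cosmetic difference is that you identify $\int\varphi^u\,d\mu_L=-h_{\mu_L}$ via Pesin's formula, whereas the paper only needs the well-known fact that this integral is negative.
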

 
 \begin{proof} Our argument is similar to the proof of part 8 of Proposition 5 in \cite{BG14}.  If $\nu$ is a measure on $T^1M$, we set
 $$
 I(\nu) =  \int_{T^1M} \varphi^u(v)\,d\nu(v).
 $$
 
 It suffices to show that if $\mu_{q_n}\to \mu$ in weak$^*$-topology for a sequence $q_n\to 1-$,  then $\mu=\mu_L$. 
 
  It is well known that $I(\mu_L) < 0$. (The opposite inequality is mistakenly asserted in \cite{BBFS21} because  the authors forgot about the minus sign in the definition of $\varphi^u$.)
 Since  $\varphi^u = 0$ on the set $\S$ and any equilibrium state for $\varphi^u$ is a convex linear combination of $\mu_L$ and a measure supported on $\S$, it follows that $I(\nu) > I(\mu_L)$  for any equilibrium state $\nu$ for $\varphi^u$ other than $\mu_L$. It will therefore suffice to show that $\mu$ is an equilibrium state for  $\varphi^u$ and that $I(\mu) \leq I(\mu_L)$.
  
 That $\mu$ is an equilibrium state for $\varphi^u$ follows from  the next lemma, which is a general and well known fact. The entropy map for our our geodesic flow $\F$ is upper semicontinuous, because $\F$  is $h$-expansive (see Proposition 3.3 of \cite{K98}), and it is obvious that the other hypotheses of the lemma hold.
 
 \begin{lem} Let $\F$ be a continuous flow on a compact metric space $X$ for which the entropy map $\nu \mapsto h_\nu(\F)$ is upper semicontinuous. Suppose $\varphi_n: X \to \mathbb R$ are continuous functions that converge uniformly to a continuous function $\varphi: X \to \mathbb R$. For each $n$, let $\nu_n$ be an equilibrium state for $\varphi_n$, and assume that $\nu_n \to \nu$ in the weak-$^*$ topology. Then $\nu$ is an equilibrium state for $\varphi$.
 \end{lem}

 \begin{proof} For each $n$ we have 
  $\displaystyle{
   h_{\nu_n}(\F) + \int_X \varphi_n(x)\,d\nu_n(x) = P(\varphi_n).}
  $
  The hypotheses of the lemma give us
  $$
  \liminf_{n \to \infty}\left[ h_{\nu_n}(\F) + \int_X \varphi_n(x)\,d\nu_n(x) \right] \leq
    h_{\nu}(\F) + \int_X \varphi(x)\,d\nu(x).
  $$
  On the other hand, $P(\varphi_n) \to P(\varphi)$ because the function $P$ is continuous. 
 \end{proof}
 
 We now prove that $I(\mu) \leq I(\mu_L)$.  The following lemma summarizes well-known consequences of the variational principle.
  \begin{lem} 
  Let $\F$ be a continuous flow on a compact metric space $X$ and $\varphi: X\to \mathbb{R}$ a continuous function. Then
  \begin{enumerate}
  \item The function $\mathcal P: \mathbb{R}\to\mathbb{R},  q \mapsto P(q\varphi^u)$ is convex.
  
  \item If $\nu$ is an equilibrium state for $Q\varphi$, then the graph of the function $q \mapsto  h_{\nu} (\F)+ q\int_X \varphi(x) \,d\nu(x)$  is a supporting line for the graph of $\mathcal P$ at $(Q,\mathcal P(Q))$. 
  
  \item If $Q_1\leq Q_2$,  and $\nu_i$ is an equilibrium state for $Q_i\varphi, \, i=1,2$,  then 
  $$\int_X \varphi(x) \,d\nu_1(x)\leq \int_X \varphi(x) \,d\nu_2(x).$$
  \end{enumerate}
 \end{lem}

 %The function $\mathcal P: q \mapsto P(q\varphi^u)$ is convex by the variational principle. The measure  $\mu_{Q}$ is an equilibrium state for $Q\varphi^u$ for each $Q \leq 1$. It follows from the variational principle that,  for $Q \leq 1$, the graph of the function $q \mapsto  h_{\mu_{Q}} (\F)+ qI(\mu_Q)$  is a supporting line for the graph of $\mathcal P$ at the point $(Q,\mathcal P(Q))$. The convexity of the function $\mathcal P$ means that the slopes of these lines cannot decrease as $Q$ increases. 

By the above lemma,  we see immediately that $I(\mu_{q_n}) \leq I(\mu_L)$ for all $n$. Since $I(\mu_{q_n}) \to I(\mu)$ as $n \to \infty$, we obtain $I(\mu) \leq I(\mu_L)$ as desired.
 \end{proof}

%This paper owes much to the work of Todd Fisher in \cite{BCFT18} and \cite{BBFS21}. We shall miss him greatly.

While we were writing this paper,  we learned with sorrow of the recent passing of Todd Fisher.  The work in this paper builds on the work of Todd Fisher in \cite{BCFT18} and \cite{BBFS21},  and we would like to dedicate it to his memory.
 
\textit{Acknowledgment.} The authors would like to thank Federico Rodriguez Hertz for bringing up this question and for suggesting improvements in the exposition.

\end{document}